\title[Rational curves]
{On the existence of rational curves on projective hyperk\"ahler fourfolds}
\author{Haidong Liu}
\date{\today, version 0.02}
\subjclass[2010]{Primary 14J32; Secondary 14E30, 14J30}
\keywords{rational curves, hyperk\"ahler manifolds}
\address{Sun Yat-sen University, Department of mathematics, Guangzhou, 510275, China}
\email{liuhd35@mail.sysu.edu.cn, jiuguiaqi@gmail.com}
\DeclareMathOperator{\NS}{N^1}
\DeclareMathOperator{\pseff}{Pseff}
\DeclareMathOperator{\eff}{Eff}
\DeclareMathOperator{\nef}{Nef}
\newtheorem{thm}{Theorem}[section]
\newtheorem{lem}[thm]{Lemma}
\newtheorem{conj}[thm]{Conjecture}
\newtheorem{cor}[thm]{Corollary}
\theoremstyle{definition}
\newtheorem{rem}[thm]{Remark}
\newtheorem*{ack}{Acknowledgments}
\begin{document}

\begin{abstract}
We show that if $X$ is a projective hyperk\"ahler fourfold and
there exists a nonzero effective divisor $D$ which is not of bi-elliptic type
and contained in the boundary of the nef cone of $X$,
then  $X$ contains a rational curve.
This is a very special case of Oguiso's conjecture for projective hyperk\"ahler fourfolds.
\end{abstract}

\maketitle 
\tableofcontents

\section{Introduction}\label{sec1}

This paper is devoted to the study of the existence of rational curves 
on projective hyperk\"ahler manifolds, motivated by a conjecture of Oguiso in \cite{oguiso}:

\begin{conj}
[Oguiso's conjecture]
\label{og.conj}
	Let $X$ be a projective hyperk\"ahler or Calabi--Yau manifold.
	Assume that there exists a nonzero Cartier divisor 
	$D$ contained in the boundary of the nef cone of $X$.
	Then, $X$ contains a rational curve.
\end{conj}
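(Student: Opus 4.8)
The plan is to split the argument according to the numerical size of the boundary class $D$, treating the hyperk\"ahler and Calabi--Yau cases uniformly through the single structural fact that $K_X$ is trivial (so $X$ is canonical, hence klt, and $aD-K_X\equiv aD$ for every $a>0$). Since $D$ lies on the boundary of the nef cone, $D$ is nef but not ample, and I would measure it by its numerical dimension $\nu(D)$, distinguishing the \emph{big} case $\nu(D)=\dim X$ from the \emph{non-big} case $\nu(D)<\dim X$. In the hyperk\"ahler setting ($\dim X=2n$) these are detected by the Beauville--Bogomolov--Fujiki form $q$: since the nef cone lies in the closure of the positive cone, a nef class satisfies $q(D)\ge 0$, and by the Fujiki relation $D^{2n}=c_X\,q(D)^n$ one has $\nu(D)=\dim X$ exactly when $q(D)>0$, while $\nu(D)<\dim X$ forces $D$ isotropic, $q(D)=0$.

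\textbf{The big case.} Suppose $D$ is big and nef. Then $aD-K_X\equiv aD$ is big and nef for $a>0$, so the Kawamata--Shokurov base-point-free theorem applies and $D$ is semiample. Let $f\colon X\to Y$ be the associated morphism to a normal projective variety, with $D=f^{*}A$ for $A$ ample; since $D$ is big, $f$ is birational, and since $D$ is not ample, $f$ is not an isomorphism and contracts a positive-dimensional fiber $F$. As $K_X$ is trivial, $f$ is crepant, so $-K_X\equiv_f 0$. Hence the rational-chain-connectedness theorem for fibers of crepant birational contractions of klt varieties (Hacon--McKernan, Kollár) shows that every positive-dimensional fiber, in particular $F$, is rationally chain connected, and therefore contains a rational curve. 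This settles the big case in both the hyperk\"ahler and the Calabi--Yau settings.

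\textbf{The non-big case and the main obstacle.} The remaining, genuinely hard, case is $\nu(D)<\dim X$, i.e. $D$ nef but not big (for hyperk\"ahler, $D$ isotropic). The expected structure, predicted by the SYZ philosophy for hyperk\"ahler manifolds and by abundance in the Calabi--Yau case, is that $D$ is again semiample and defines a fibration $f\colon X\to Z$ with $0<\dim Z=\nu(D)<\dim X$; for hyperk\"ahler $X$ this is a Lagrangian fibration with abelian general fibers and base $\mathbb{P}^{n}$ by Matsushita's theorem, while for Calabi--Yau $X$ the general fiber is a lower-dimensional Calabi--Yau or abelian variety. The crux is twofold. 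First, one must prove that an isotropic (numerically deficient) nef class is semiample: this is precisely the hyperk\"ahler SYZ conjecture, and the corresponding abundance statement in the Calabi--Yau case, which is open in general and is the principal obstacle to the whole conjecture. Second, even granting the fibration, the general fibers are abelian-type and carry no rational curves, so the rational curves must be sought in the \emph{singular} fibers over the discriminant: I would analyze the degenerations of abelian varieties there (a nodal degeneration already produces a rational curve), or run a relative minimal model program over $Z$ and apply adjunction together with the chain-connectedness result to the degenerate locus.

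\textbf{How I would try to circumvent the obstacle.} Because semiampleness of isotropic nef classes is unavailable in full generality, I would exploit it only where it is known---deformations of the known hyperk\"ahler types via Bayer--Macr\`i and Matsushita, and the Calabi--Yau cases where abundance holds---and elsewhere replace it by automorphism-theoretic input in the spirit of Oguiso: when $D$ lies on an irrational part of the boundary, the cone conjecture forces $\Aut(X)$ to be infinite, and an infinite orbit of a subvariety or of a movable class can be degenerated through bend-and-break--type arguments to yield rational curves. Pinning this last mechanism down uniformly is exactly where I expect the real difficulty to persist, consistent with the conjecture being open in the stated generality and with the present paper establishing only a special case.
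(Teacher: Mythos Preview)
The statement you are addressing is Conjecture~\ref{og.conj}, and the paper does \emph{not} prove it: it is stated as an open conjecture, and the paper only establishes the special case of Theorem~\ref{thm.main} (projective hyperk\"ahler fourfolds, $D$ effective and not of bi-elliptic type). There is therefore no ``paper's own proof'' to compare your proposal against.

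As for your proposal itself, it is not a proof but a road map, and you are forthright about this. Your big case ($\nu(D)=\dim X$) is standard and correct: base-point-freeness plus rational chain connectedness of exceptional fibres. This is exactly how the paper disposes of $\nu(D)=4$ at the start of the proof of Theorem~\ref{thm.main}, citing \cite{kawamata}*{Theorem~2}. Your non-big case, however, is where the conjecture lives, and here you correctly identify the two genuine obstructions---semiampleness of isotropic nef classes (hyperk\"ahler SYZ) and abundance (Calabi--Yau)---and then explicitly defer to them or to cone-conjecture/automorphism heuristics that are themselves open. So the honest summary is: your argument proves the easy (big) part, which was already known, and does not prove the hard (non-big) part, which is the actual content of the conjecture.

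It is worth noting how the paper sidesteps exactly the obstacles you name in its restricted setting. Rather than attempting to show $D$ is semiample on $X$, the paper assumes $D$ is \emph{effective}, passes to a relative minimal model $E$ of the normalization of the support, and runs the abundance theorem \emph{in dimension three} on $E$ to produce a fibration $E\to C$; it then analyses the general fibre (K3/Enriques give rational curves directly, abelian fibres give a Lagrangian torus via a Fujiki-form computation and hence an almost holomorphic Lagrangian fibration by \cites{cop,glr,hw}, handled in Theorem~\ref{thm.fib}; bi-elliptic fibres are excluded by hypothesis). None of this is available under your hypotheses, since you do not assume $D$ effective, do not restrict to dimension four, and cannot invoke three-dimensional abundance on a divisor.
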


Here a simply connected manifold $X$ with trivial canonical bundle 
is called a \textit{hyperk\"ahler manifold} (resp. a \textit{Calabi--Yau manifold})  
if $H^2(X, \mathcal O_X) \cong \mathbb C$ (resp. $H^i(X, \mathcal{O}_X)=0$ for $0<i<\dim X$). 
The Beauville--Bogomolov--Yau decomposition \cite{beauville} shows that 
any projective manifold with trivial canonical bundle can be decomposed, 
up to a finite \'etale cover, 
into the product of abelian manifolds, Calabi--Yau manifolds, and hyperk\"ahler  manifolds. 
While it is well-known that abelian manifolds do not contain rational curves, 
it is expected from different topics that the other two cases do contain rational curves.
From the point of view of birational geometry, it is natural to assume additionally the 
existence of a nonzero nef but not ample Cartier divisor as suggested by Oguiso, which is
closely related to the semiampleness conjecture
(see \cite{kollar}*{Conjecture 51},~\cite{lop1}*{Section 4} 
and ~\cite{lp} for conventions and a survey of the related conjectures;
it is also a version of the so-called Strominger--Yau--Zaslow (SYZ) conjecture 
for projective hyperk\"ahler manifolds, see \cite{verbitsky}*{Conjecture 1.7}).

It is well-known that Conjecture \ref{og.conj} holds true in dimension 2. In dimension 3,
Peternell \cite{peternell} (see also Oguiso \cite{oguiso}) proved  it when the nef divisor $D$ is effective.
When $D$ is (possibly) not effective,
Diverio and Ferretti \cite{df} proved it for $\rho(X)\geq 5$, building on the seminal works of Wilson
~\cites{wilson1,wilson2};
recently, the author and Svaldi \cite{liu-svaldi} proved it for $c_3(X)\neq 0$
by a different approach, 
building on the works \cites{lop, wilson2, wilson3} and the references
therein. Combining these results together, the only remaining case of Oguiso's conjecture in dimension 3
is that $X$ is a Calabi--Yau threefold with $c_3(X)=0$ and $2\leq \rho(X)\leq 4$.
In higher dimensions, very little results are known. In this paper,
we study a very special case 
of Oguiso's conjecture for projective hyperk\"ahler fourfolds.

\begin{thm}[Theorem \ref{thm.main}]
\label{main.thm}
	Let $X$ be a projective hyperk\"ahler fourfold.
	Assume that there exists a nonzero effective divisor 
	$D$ contained in the boundary of the nef cone of $X$.
	Then, $X$ contains a rational curve, except possibly that
	$D$ is of bi-elliptic type.
\end{thm}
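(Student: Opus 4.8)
The plan is to split the argument according to the Beauville--Bogomolov--Fujiki square $q(D)\ge 0$ of the nef class $D$, treating separately the big case $q(D)>0$ and the isotropic case $q(D)=0$.

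\emph{The case $q(D)>0$.} Here $D$ is big and nef, so by the basepoint-free theorem applied to the klt pair $(X,0)$ — note that $D-K_X=D$ is big and nef — the divisor $D$ is semiample, and for $m\gg 0$ the linear system $|mD|$ defines a birational morphism $\varphi\colon X\to Y$ onto a normal projective variety with $D\sim_{\mathbb Q}\varphi^{*}A$, $A$ ample on $Y$. Since $D$ is not ample, $\varphi$ is not an isomorphism, so $\Exc(\varphi)\ne\emptyset$. Because $\omega_X\cong\mathcal O_X$, Grauert--Riemenschneider vanishing gives $R^i\varphi_{*}\mathcal O_X=R^i\varphi_{*}\omega_X=0$ for $i>0$, hence $Y$ has rational — in fact canonical — singularities (indeed $Y$ is a projective symplectic variety with $K_Y=0$). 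By the theorem that the positive-dimensional fibres of a birational morphism from a smooth variety onto a variety with canonical singularities are rationally chain connected, every component of $\Exc(\varphi)$ is covered by rational curves, so $X$ contains a rational curve. This also disposes of the subcase where $D$ lies on a flopping wall, $\varphi$ being the associated flopping contraction.

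\emph{The case $q(D)=0$.} By the Fujiki relation one computes $D^3\equiv 0$ while $D^2\cdot H^2>0$ for any ample $H$, so $D$ has numerical dimension $2$; moreover $D^2\cdot D_i=0$ for every prime component $D_i$ of $D$, so each $D|_{D_i}$ is nef of numerical dimension at most $1$. The aim is to extract a fibration from $D$. Granting the pertinent semiampleness — this is the hyperk\"ahler SYZ conjecture in general, and circumventing it for fourfolds, using the lattice theory of the two deformation types, the structure of $\overline{\mathrm{Mov}}(X)$ after Markman and Amerik--Verbitsky, and the effectivity of $D$ (which lets one handle the Iitaka fibration of $D$ directly when $\kappa(D)=2$), is the technical heart — one obtains, on $X$ itself or on a birational model $X'$ joined to $X$ by a chain of flops, a Lagrangian fibration $f\colon X'\to B$ whose pullback of an ample class is the transform of $D$. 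If $X'\ne X$, the flopping curves produced along $X\dashrightarrow X'$ are already rational and we are done; otherwise $f\colon X\to B$, where by the theorems of Matsushita and Hwang $B$ is a rational surface (isomorphic to $\mathbb P^2$ when smooth) with general fibre an abelian surface. Choosing a general rational curve $\ell\subset B$, the threefold $f^{-1}(\ell)\to\ell\cong\mathbb P^1$ is a family of abelian surfaces: if it has a singular fibre, that fibre — a degenerate abelian surface — contains a rational curve; if all its fibres are smooth, the monodromy over $\mathbb P^1$ is trivial, the family is isotrivial, hence a product $A\times\mathbb P^1$, and $\{\mathrm{pt}\}\times\mathbb P^1\subset X$ is a rational curve. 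Either way $X$ contains a rational curve.

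\emph{The main obstacle.} Everything delicate is concentrated in the second case. Besides the semiampleness issue above, one must rule out $\kappa(D)\le 1$: the value $\kappa(D)=1$ is excluded because $\nu(D)=2$ forbids $D^2\equiv 0$, whereas the class of a threefold fibre over $\mathbb P^1$ squares to zero. What cannot be controlled by these methods is precisely the configuration in which the general fibre of the relevant fibration (or of the family of $D$-trivial subvarieties extracted from $|mD|$) is, up to finite \'etale cover, an abelian variety carrying a free finite group action — a bi-elliptic surface in the surface case — since then the fibre is not uniruled and neither the monodromy argument nor the degeneration argument forces a rational curve. This is exactly the situation that ``$D$ of bi-elliptic type'' isolates, and it is the case left open; outside it, the dichotomy above yields a rational curve on $X$.
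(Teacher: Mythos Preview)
Your big case $q(D)>0$ matches the paper. The gap is in the isotropic case: your route hinges on producing a Lagrangian fibration from $D$, which is precisely the open SYZ conjecture. The proposed workarounds do not close it: invoking ``the two deformation types'' presupposes that every hyperk\"ahler fourfold is of $K3^{[2]}$ or generalized Kummer type, which is unknown; the Markman and Amerik--Verbitsky results on the movable and K\"ahler cones do not deliver semiampleness of an isotropic nef class on an arbitrary hyperk\"ahler; and ``handling the Iitaka fibration when $\kappa(D)=2$'' begs the question, since $\kappa(D)=2$ is exactly what abundance would give and is not known a priori (one only has $\kappa(D)\le\nu(D)=2$).

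The paper avoids SYZ entirely by working on the \emph{threefold} $D$ rather than on $X$. Taking a relative minimal model $E$ of the normalization of $D$, subadjunction gives $K_E+B=f^*(D|_D)$ with $\nu(K_E+B)\le 1$, and now \emph{threefold} abundance --- which is a theorem --- controls everything. Either rational curves appear directly from the cone theorem on $E$, or one obtains a fibration $g\colon E\to C$ whose general fibre $F$ has $K_F\sim_{\mathbb Q}0$. The bi-elliptic exception arises \emph{here}, as one possibility for $F$; it is not, as in your sketch, a phenomenon of the fibres of a Lagrangian fibration on $X$ (whose general fibre is always abelian by Matsushita), so your location of the exception does not match the statement you are trying to prove. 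When $F$ is abelian, a computation with the symplectic form shows $F\subset X$ is a Lagrangian torus, and then the \emph{proved} solution to Beauville's question (\cites{cop,glr,hw}) yields an almost holomorphic Lagrangian fibration on $X$ without ever assuming $D$ semiample. A separate argument (Theorem~\ref{thm.fib}) --- more careful than ``a degenerate abelian surface contains a rational curve'', which is false as stated since the special fibre could be a multiple smooth abelian or bi-elliptic surface --- then finishes.
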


In this paper, an effective divisor $D$ on a projective hyperk\"ahler fourfold $X$ 
is said to be of \emph{bi-elliptic type} if there exists a 
 relative minimal model $E$ (see \cite{liu-matsumura}*{Subsection 2.3} for the definition)
of the normalization of $D$ with $f\colon E\to D$ being the corresponding morphism,
such that $E$ admits a fibration $g\colon E\to C$ onto a smooth curve $C$ whose general fiber
is a bi-elliptic surface.

Because there is no abundance theorem
for projective hyperk\"ahler fourfolds,
which is one of the main tools to find rational curves in lower dimensions (see \cite{oguiso, peternell}),
the main idea of our paper turns to find rational curves on $D$.
In this approach, the case of bi-elliptic type seems hard to be excluded.
However, finding elliptic curves on projective hyperk\"ahler manifolds is also important;
so we state the following consequence,
whose proof can be seen in the proof of our main theorem.

\begin{cor}
	Let $X$ be a projective hyperk\"ahler fourfold.
	Assume that there exists a nonzero effective divisor 
	$D$ contained in the boundary of the nef cone of $X$.
	Then, $X$ contains a rational curve or an elliptic curve.
\end{cor}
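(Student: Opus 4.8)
The plan is to extract the Corollary from Theorem~\ref{main.thm} with a small amount of extra bookkeeping in the bi-elliptic case. If $D$ is not of bi-elliptic type, then Theorem~\ref{main.thm} already yields a rational curve on $X$ and there is nothing to prove; so I would assume $D$ is of bi-elliptic type and produce an elliptic curve on $X$. By definition we are given a relative minimal model $E$ of the normalization of $D$, together with a projective morphism $f\colon E\to D$ which is generically one-to-one onto $D$. Since $E$ is normal, $f$ factors through the normalization $\bar D\to D$, and by Zariski's main theorem $f$ is an isomorphism over a dense open subset $D^{\circ}\subset D$. We are also given a fibration $g\colon E\to C$ onto a smooth curve whose general fiber $F$ is a bi-elliptic surface.

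Next I would invoke the well-known fact that a bi-elliptic surface is covered by elliptic curves. Writing $F\simeq(A\times B)/G$ with $A,B$ elliptic curves and $G$ a finite group, the two projections induce fibrations $F\to A/G$ and $F\to B/G\simeq\mathbb P^{1}$ whose general fibers are elliptic curves isogenous to $B$ and to $A$ respectively; equivalently, the Albanese map $F\to\alb F$ is a genus-one fibration over an elliptic curve. In either case the fibers of such a fibration sweep out all of $F$. Letting $F$ range over $C$, I obtain in this way a family of elliptic curves covering $E$.

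A general member $\ell$ of this family is then not contained in the exceptional locus of the birational morphism $E\to\bar D$, nor in $f^{-1}(D\setminus D^{\circ})$; in other words $f|_{\ell}$ is an isomorphism over a dense open subset of $\ell$, so $f|_{\ell}\colon\ell\to f(\ell)$ is birational onto its image. Hence $f(\ell)\subset D\subset X$ is a curve whose normalization is the smooth elliptic curve $\ell$, and $X$ contains an elliptic curve.

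I do not foresee a serious obstacle here: the entire content of the statement lies in Theorem~\ref{main.thm}, and the only addition is the elementary observation that, in the bi-elliptic case, the rational-curve construction of that theorem degenerates at worst to an elliptic curve — which is exactly the covering-family argument above. Should one prefer to avoid the structure theory of bi-elliptic surfaces, it suffices to note that $\kappa(F)=0$ together with $q(F)=1$ forces the Albanese map $F\to\alb F$ to be a genus-one fibration, again producing a covering family of elliptic curves on $E$, hence an elliptic curve on $X$.
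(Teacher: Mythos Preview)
Your proposal is correct and matches the paper's intended argument: the paper states that the proof ``can be seen in the proof of our main theorem'', and the only case not already producing a rational curve there is the bi-elliptic one, which you dispatch exactly as expected by pushing forward the elliptic curves covering a general bi-elliptic fiber $F$ along the birational morphism $f\colon E\to D$. The verification that a general such $\ell$ is not contracted by $f$ (so that $f(\ell)$ is a curve of geometric genus one) is the only point requiring care, and you handle it correctly.
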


\begin{ack}
The author is supported by the NSFC (grant no.~12001018).
He would like to thank Professors Chen Jiang, Shin-ichi Matsumura, Roberto Svaldi,
 Qizhen Yin for useful discussions and suggestions.
He would also like to thank Mirko Mauri and Thorsten Beckmann for pointing out some mistakes 
in Theorem \ref{thm.fib} and
that a previous cited \cite{matsushita}*{Lemma 1} is missing in its published version.
\end{ack}

Throughout this paper,  we work over the complex number field $\mathbb C$.
A {\em scheme} is always assumed to be separated and of finite type over $\mathbb{C}$, 
and a {\em variety} is a reduced and irreducible algebraic scheme. 
We will freely use the basic notation 
in \cites{fujino-foundations, huy, kollar-mori, laz}.

\section{Preliminaries}\label{sec2}
\subsection{Cones of divisors}
We adopt the following notation to denote the various cones of divisors on a variety $X$:
\begin{itemize}
    \item $\NS(X)_{\mathbb R}$ denotes the N\'eron--Severi $\mathbb R$-vector space on $X$.

    \item The {\em{nef cone}} $\nef(X)\subset \NS(X)_{\mathbb R}$ is the convex cone of all nef $\mathbb R$-divisor classes on $X$. 
    The interior of $\nef(X)$ is the convex cone of all ample $\mathbb R$-divisor classes. 

    \item The {\em{effective cone}} $\eff(X)\subset \NS(X)_{\mathbb R}$ is the convex cone spanned by
 all effective $\mathbb R$-divisor classes on $X$.

    \item The {\em{pseudoeffective cone}} $\pseff(X)\subset \NS(X)_{\mathbb R}$ is the closure of $\eff(X)$. 
    The interior of $\pseff(X)$, denoted as $\pseff(X)^{\circ}$, is the convex cone of all big $\mathbb R$-divisor classes. 
   
\end{itemize}
It is well-known that $\nef(X)\subset \pseff(X)$ 
(see~\cite{laz}*{Chapter~1}).

\begin{lem}\label{lem.n=p}
Let $X$ be a $\mathbb Q$-factorial terminal projective variety such that $K_X\sim_{\mathbb Q}0$.
If $X$ contains no rational curve, then $\nef(X)=\pseff(X)$. 
\end{lem}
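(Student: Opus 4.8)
\emph{The plan is to argue by contradiction via the Cone Theorem.} Suppose $X$ contains no rational curve but $\nef(X)\neq\pseff(X)$. Since $\nef(X)\subseteq\pseff(X)=\overline{\eff(X)}$ and $\nef(X)$ is closed, if we had $\eff(X)\subseteq\nef(X)$ then $\pseff(X)=\overline{\eff(X)}\subseteq\nef(X)$, forcing $\nef(X)=\pseff(X)$; hence $\eff(X)\not\subseteq\nef(X)$. The classes of effective $\mathbb Q$-divisors are dense in $\eff(X)$, and the set of non-nef classes is open (as $\nef(X)$ is closed), so the first step is to extract a \emph{nonzero effective $\mathbb Q$-divisor} $D$ whose class lies in $\eff(X)\setminus\nef(X)$.

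Next I would manufacture an auxiliary klt pair. Because $X$ is $\mathbb Q$-factorial and terminal, the pair $(X,\varepsilon D)$ is klt for all sufficiently small rational $\varepsilon>0$: on a fixed log resolution of $(X,D)$ the discrepancies depend affinely on $\varepsilon$ and are strictly positive at $\varepsilon=0$, so they stay $>-1$ for small $\varepsilon$. Since $K_X\sim_{\mathbb Q}0$ we get $K_X+\varepsilon D\equiv\varepsilon D$, which is \emph{not} nef precisely because $D$ is not nef.

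Now apply the Cone Theorem (e.g.\ \cite{kollar-mori}*{Theorem 3.7}) to the projective klt pair $(X,\varepsilon D)$. As $K_X+\varepsilon D$ fails to be nef, the $(K_X+\varepsilon D)$-negative part of $\NE(X)$ is nonempty, so there is a rational curve $C\subseteq X$ with $0<-(K_X+\varepsilon D)\cdot C\le 2\dim X$. This contradicts the hypothesis that $X$ contains no rational curve, and therefore $\nef(X)=\pseff(X)$.

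\emph{Where is the difficulty?} There is essentially none beyond bookkeeping: the first step only uses density of rational classes together with the closedness of the nef cone, and the second step is the standard fact that a small effective boundary preserves kltness of a terminal variety. The one genuinely load-bearing input is that the Cone Theorem supplies \emph{rational} curves in the negative part of the cone; with $K_X$ numerically trivial, perturbing by a non-nef effective divisor is exactly the maneuver that places us in the range of applicability of that theorem. (An alternative, heavier route would be to select a nef and big $\mathbb Q$-divisor on the boundary of $\nef(X)$---its rationality guaranteed by Kawamata's Rationality Theorem---conclude it is semiample by the Basepoint-Free Theorem, and observe that the induced birational morphism must contract a positive-dimensional fiber, which is rationally chain connected since $-K_X$ is relatively numerically trivial; but the Cone Theorem argument above is shorter and avoids these extra ingredients.)
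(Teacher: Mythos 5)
Your proposal is correct and follows essentially the same route as the paper: extract a nonzero effective divisor that is not nef from the strict inclusion $\nef(X)\subsetneq\pseff(X)$, form the klt pair $(X,\varepsilon D)$ using that $X$ is terminal and $K_X\sim_{\mathbb Q}0$, and invoke the cone theorem to produce a rational curve, contradicting the hypothesis. The only cosmetic difference is that the paper selects $D$ in $\pseff(X)^{\circ}\setminus\nef(X)$ (big but not nef), whereas you select an effective $\mathbb Q$-divisor class in $\eff(X)\setminus\nef(X)$ by a density argument; both choices feed into the identical cone-theorem step.
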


\begin{proof}
For a contradiction, we assume that $\nef(X)\subsetneq \pseff(X)$.
Since both cones are closed, there exists a nonzero effective divisor $D\in \pseff(X)^{\circ}\backslash \nef(X)$.
That is, there exists an effective divisor $D$ which is big but not nef.
Let $\epsilon$ be a sufficiently small number such that $(X, \epsilon D)$ is klt. 
Then, by the cone theorem (see \cite{kollar-mori}*{Theorem 3.7} for 
$\mathbb Q$-divisor case and  \cite{fujino-foundations}*{Theorem 4.5.2} for 
$\mathbb R$-divisor case), 
there exists a rational curve $C$
such that $(K_X+\epsilon D)\cdot C=\epsilon D\cdot C<0$. This is a  contradiction.
\end{proof}

\begin{lem}\label{lem.irr}
Let $X$ be a $\mathbb Q$-factorial terminal projective variety of dimension $n$
such that $K_X\sim_{\mathbb Q}0$. Let $D$ be a nonzero effective divisor 
contained in the boundary of $\nef(X)$.
Assume that $X$ contains no rational curve.
Then, every irreducible component of $D$ is contained in the boundary of $\nef(X)$.
\end{lem}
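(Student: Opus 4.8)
The plan is to reduce immediately to Lemma \ref{lem.n=p} and then trade nef against ample. Write $D=\sum_i a_i D_i$ with $a_i>0$ and the $D_i$ the distinct prime components of $D$. Since $X$ contains no rational curve and satisfies the hypotheses of Lemma \ref{lem.n=p}, that lemma gives $\nef(X)=\pseff(X)$. Each prime component $D_i$ is a nonzero effective divisor, hence its class lies in $\pseff(X)=\nef(X)$; thus every component of $D$ is already nef, and it only remains to rule out that some component is ample.

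So I would argue by contradiction: suppose one component --- say $D_1$ --- is ample. Choose $\epsilon$ with $0<\epsilon<a_1$. Then
\[
D-\epsilon D_1=(a_1-\epsilon)D_1+\sum_{i\geq 2}a_iD_i
\]
is again a nonzero effective divisor, hence nef by the previous step. Writing $D=(D-\epsilon D_1)+\epsilon D_1$ exhibits $D$ as a sum of a nef class and an ample class, so $D$ is ample --- contradicting the hypothesis that $D$ lies in the boundary of $\nef(X)$ (i.e., that $D$ is nef but not ample). Therefore no component of $D$ is ample, and together with the first paragraph every $D_i$ is nef but not ample, i.e., lies in the boundary of $\nef(X)$.

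I do not expect any genuine obstacle here: the only nontrivial input is Lemma \ref{lem.n=p}, which itself packages the cone theorem, together with the elementary facts that every effective class is pseudoeffective and that the sum of a nef class and an ample class is ample (since the ample cone is the interior of $\nef(X)$). If one preferred not to cite Lemma \ref{lem.n=p} directly, one could instead rerun its cone-theorem argument with the pair $(X,\epsilon D_i)$ for each $i$ to see that each $D_i$ is nef, but reusing the lemma is cleaner.
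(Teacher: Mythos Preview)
Your proof is correct and shares the same opening with the paper: both write $D=\sum_i a_iD_i$, invoke Lemma~\ref{lem.n=p} to get $\nef(X)=\pseff(X)$, and conclude that every $D_i$ is nef because it is effective. The divergence is in how ampleness of a component is ruled out. The paper argues numerically: since $D$ lies in the boundary of $\pseff(X)$ it is not big, so $D^n=0$; expanding $\bigl(\sum_i a_iD_i\bigr)^n$ and using that all mixed terms are nonnegative (as every $D_i$ is nef) forces $D_i^n=0$, hence $D_i$ is not big, hence not ample. Your argument is a pure cone argument: if some $D_1$ were ample, then $D-\epsilon D_1$ is still effective and therefore nef, so $D$ would be nef plus ample, i.e.\ ample. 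Your route avoids intersection theory entirely and is arguably cleaner; the paper's route extracts the slightly stronger numerical statement $D_i^n=0$, though this extra information is not used afterwards.
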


\begin{proof}
Let $D=\sum_ia_iD_i$ be the irreducible decomposition of $D$, where $a_i>0$ for each $i$.
Since $X$ contains no rational curve, we have that $\nef(X)=\pseff(X)$ by Lemma \ref{lem.n=p}. 
Then, $D$ is also contained in the boundary of $\pseff(X)$, that is, $D$ is nef but not big.
It follows that $D^n=0$.
Moreover, we have that
\[
D_i\in \eff(X)\subset\pseff(X)=\nef(X)
\]
for each $i$.  Therefore,
each term of the  polynomial  expansion on the left of the following equation
\[
(\sum_ia_iD_i)^n=D^n=0
\]
is non-negative. In particular, $D_i^n=0$ for each $i$, that is, $D_i$ is contained in the boundary of $\nef(X)$.
\end{proof}

\subsection{Iitaka dimension and numerical dimension}
In Lemma  \ref{lem.irr}, every irreducible component of $D$
is $\mathbb Q$-Cartier since $X$ is $\mathbb Q$-factorial.
In particular, we can always work in the setting of $\mathbb Q$-structures after replacing $D$ 
with some of its components
by Lemma \ref{lem.irr} for our purpose of this paper.  
Let $D$ be a $\mathbb Q$-Cartier $\mathbb Q$-divisor
on a normal projective variety $X$. Let $m_0$ be a positive integer 
such that $m_0D$ is a Cartier divisor. 
Let $$\Phi_{|mm_0D|}\colon X\dashrightarrow \mathbb P^{\dim\!|mm_0D|}$$ be 
the rational map given by the complete linear system $|mm_0D|$ 
for a positive integer $m$. 
Note that $\Phi_{|mm_0D|}(X)$ denotes the 
closure of the image of the rational map $\Phi_{|mm_0D|}$. 
We set 
\[
\kappa (X, D):=\max_m \dim \Phi_{|mm_0D|}(X)
\] 
if $|mm_0D|\ne \emptyset$ for some $m$ and 
$\kappa (X, D):=-\infty$ otherwise. 
We call $\kappa(X, D)$ ($\kappa(D)$ for short if there is no danger of confusion)
the {\em{Iitaka dimension}} 
of $D$. The \emph{Kodaira dimension} of a reduced and irreducible variety $X$, denoted as $\kappa(X)$,
is defined to be $\kappa(Y,K_Y)$, where $f\colon Y\to X$ is a resolution. The definition
is well-known to be independent of resolutions, and coincides with the 
Iitaka dimension $\kappa(X, K_X)$ when $X$ is normal.

When $D$ is nef, 
we define the {\em{numerical dimension}} $\nu(D)$ of $D$ as
\[
\nu(D):=\max\{h\in \mathbb N \; |  \;D^h\not\equiv 0\}.
\]
It is well-known that $\nu(D) \geq 0$, $\kappa (D)\leq \nu(D) \leq \dim X$ and $\nu(D)=\dim X$ if and only if $D$ is big.

\subsection{Codimension one reduction}\label{sub2.3}
Let $S$ be a smooth variety of dimension $n$ and $D$ be a reduced effective divisor on $S$.
Then, $D$ is simple normal crossing on $S$ in codimension one.
We take a general Kawamata's covering trick $\pi\colon S'\to S$ with respect to $D$
(see \cite{kawamata-cover}*{Lemma 17} or \cite{ev}*{3.19. Lemma}).
Note that in its original setting, $D$ is simple normal crossing on $S$;
however, their methods also work when $D$ is simple normal crossing in codimension one, 
which just leaves a codimension two set mysteriously. 
That is, outside a codimension two set, the covering $\pi\colon S'\to S$ coincides with 
Kawamata's covering trick in the simple normal crossing setting.
In general, $S'$ is normal, but not necessarily smooth.

Let $f\colon X\to S$ be a fibration where $S$ is smooth. 
Then, $f$ is smooth outside a Zariski closed set $Z$ of $S$.
Since there is nothing to do if $\dim Z\leq n-2$, we assume that $Z$
is a reduced divisor. Then, we 
take a general Kawamata's covering trick $\pi\colon S'\to S$  with respect to $Z$ as above, 
and take the normalization $X'$ of the main part of $X\times_{S} S'$.
When the covering $\pi\colon S'\to S$ is taken suitably, 
the fibers of the reduced morphism $f'\colon X'\to S'$ are reduced in codimension one.
Then, such a reduced morphism $f'\colon X'\to S'$ is called a \emph{codimension one reduction of $f$}.
In general, $X'$ and $S'$ might be both very singular; 
but cutting $S$ by general and ample 
hyperplanes $H_1, H_2, \cdots H_{n-1}$, we can restrict the commutative diagram 
to 
\[
\xymatrix{
Y' \ar[d]_-{f'}\ar[r]& Y\ar[d]^-{f}\\
C' \ar[r] & C
}
\]
where $C:=S\cap H_1 \cdots \cap H_{n-1}$ and $C'$ is the pulling back of $C$ by $\pi$.
It is easy to see that this diagram is (an \'etale cover of) a stable reduction.

\section{The main result}\label{sec3}

In this section, we prove the main result of this paper.

\begin{thm}
\label{thm.main}
Let $X$ be a projective hyperk\"ahler fourfold.
	Assume that there exists a nonzero effective divisor 
	$D$ which is not bi-elliptic type
	and contained in the boundary of the nef cone of $X$.
	Then, $X$ contains a rational curve.
\end{thm}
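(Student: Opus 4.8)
The plan is to argue by contradiction: assume $X$ contains no rational curve. The strategy is to reduce everything to a single irreducible divisor, use the hyperk\"ahler structure to pin down its numerical behaviour, and then show that its geometry forces either a rational curve (a contradiction) or the bi-elliptic type.

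\textbf{Step 1: Reduce to an irreducible non-uniruled divisor in $\partial\nef(X)$.} By Lemma \ref{lem.irr}, every irreducible component $D_i$ of $D$ lies in the boundary of $\nef(X)$, and $D_i^4 = 0$. Since $X$ is a hyperk\"ahler fourfold, $X$ is $\mathbb Q$-factorial terminal with $K_X \sim 0$; any rational curve on $D_i$ deforms or at worst its image is still a rational curve on $X$, so we may assume no $D_i$ is uniruled (otherwise a general point of a uniruled $D_i$ lies on a rational curve in $X$). Also note that if $D$ is not of bi-elliptic type, then neither is any irreducible component whose normalization has a relative minimal model fibered in bi-elliptic surfaces — so after replacing $D$ by a suitable irreducible component we may assume $D$ is irreducible, reduced, not uniruled, not of bi-elliptic type, and $D^4 = 0$ with $D$ nef.

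\textbf{Step 2: Determine the numerical dimension of $D$.} The key input is the Beauville--Bogomolov--Fujiki quadratic form $q$ on $H^2(X,\mathbb R)$: for a nef non-big class $D$ on a hyperk\"ahler fourfold one has $q(D) \geq 0$, and $D^4 = c\, q(D)^2$ for the Fujiki constant $c>0$, so $D^4 = 0$ forces $q(D)=0$, hence $D^2 \equiv 0$ as well (since $\alpha \mapsto q(D,\alpha)$ is, up to the cup product with $D$, controlled by $q(D)$, or more directly $D^2\cdot\alpha\cdot\beta$ is a multiple of $q(D)q(\alpha,\beta)$ plus $q(D,\alpha)q(D,\beta)$-type terms which all vanish). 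Thus $\nu(D) = 1$: we have $D^2 \equiv 0$ but $D \not\equiv 0$.

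\textbf{Step 3: Analyze the divisor $D$ via its normalization and a relative minimal model.} Let $\nu\colon \widetilde D \to D$ be the normalization and pass to a relative minimal model $f\colon E \to D$ as in the definition of bi-elliptic type; $E$ is a minimal threefold. Since $D$ is not uniruled, $\kappa(E) \geq 0$, and the abundance theorem for threefolds gives a semiample $K_E$, hence an Iitaka fibration $g\colon E \to B$. One now runs through the possibilities for $\dim B = \kappa(E) \in \{0,1,2,3\}$. The case $\kappa(E)=3$ ($E$ of general type) should be excluded because $D \subset X$ with $X$ hyperk\"ahler and $D$ nef of numerical dimension $1$ severely constrains the positivity of $K_D$ via adjunction $K_D \sim_{\mathbb Q} (K_X + D)|_D = D|_D$, which is nef of numerical dimension $\leq 1$ on $D$. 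Similarly $\kappa(E) \in \{2,3\}$ are incompatible with $\nu(D|_D) \leq 1$. This leaves $\kappa(E) \in \{0,1\}$: if $\kappa(E)=1$, the general fiber of $g$ is a minimal surface with $\kappa = 0$, i.e.\ abelian, bi-elliptic, K3, or Enriques — bi-elliptic is excluded by hypothesis, and K3/Enriques surfaces contain rational curves, so they would produce a rational curve on $X$ (these rational curves survive under $f$ and $\nu$ since $f$ and $\nu$ are birational), leaving only abelian fibers, which yields elliptic curves on $X$ (the Corollary's conclusion); a finer argument using the non-bigness of $D$ and the nef reduction of $D$ should upgrade the abelian-fiber case as well. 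If $\kappa(E)=0$, then $E$ is (up to finite cover) abelian or the relevant Bogomolov decomposition applies, again producing abelian subvarieties and hence elliptic curves, a situation to be pushed to a contradiction using the precise geometry of hyperk\"ahler fourfolds (e.g.\ there is no non-constant map from a positive-dimensional abelian variety into a hyperk\"ahler manifold that covers a divisor, by a deformation/holonomy argument, or Matsushita-type results on fibrations).

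\textbf{Main obstacle.} The hard part will be Step 3: controlling the Kodaira dimension and Iitaka fibration of the (singular, possibly non-normal) divisor $D$ using only that it is nef with $\nu(D)=1$ inside a hyperk\"ahler fourfold, and in particular ruling out the abelian and Enriques-fibered cases without inadvertently also ruling out bi-elliptic (which we cannot). This requires the codimension-one reduction / stable reduction machinery of Subsection \ref{sub2.3} to pass between $D$, its normalization, and its minimal model while tracking rational curves, together with a careful use of adjunction and the Beauville--Bogomolov form to show that a non-bi-elliptic $D$ of numerical dimension one must carry a rational curve, which then lifts to $X$.
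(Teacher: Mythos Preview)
Your proposal has a concrete error in Step~2 that derails the rest of the argument. The Fujiki relation on a hyperk\"ahler fourfold gives
\[
D^2\cdot\alpha\cdot\beta \;=\; c\bigl(q(D)q(\alpha,\beta)+2\,q(D,\alpha)q(D,\beta)\bigr),
\]
so $q(D)=0$ only kills the first term; the cross terms $q(D,\alpha)q(D,\beta)$ survive. In particular $D^2\cdot H^2>0$ for an ample $H$ (the paper actually uses this), and hence $\nu(D)=2$, not $1$. This is the well-known trichotomy $\nu(D)\in\{0,\tfrac12\dim X,\dim X\}$ for nef classes on a hyperk\"ahler manifold. As a consequence you also miss the case $\nu(D)=4$, where $D$ is nef and big and one concludes via the basepoint-free theorem and Kawamata's uniruledness of exceptional loci.

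With $\nu(D)=2$ one still gets $(K_E+B)^2\equiv 0$ on the relative minimal model $E$ (because $D^3\equiv 0$), so the Kodaira-dimension bookkeeping in your Step~3 can be salvaged, but there are two further gaps. First, you drop the subadjunction boundary $B$ and write $K_D\sim D|_D$; the paper needs the pair $(E,B)$ and the abundance theorem for klt threefold pairs to rule out $\kappa(K_E+B)=0$ and to produce the fibration $g\colon E\to C$. Second, and more seriously, you have no plan for the abelian-fiber case beyond ``a finer argument should upgrade it''. This is precisely where the work lies: the paper computes $\int_F\sigma\overline{\sigma}=0$ via $q(D)=0$ to show the abelian fiber $F$ is a Lagrangian torus, then invokes the solution to Beauville's question \cites{cop,glr,hw} to obtain an almost holomorphic Lagrangian fibration on $X$, and finally appeals to a separate theorem (Theorem~\ref{thm.fib}) to extract a rational curve from that fibration. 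None of the mechanisms you suggest (nef reduction, holonomy, Matsushita-type results alone) replaces this chain.
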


\begin{proof}
By Lemma \ref{lem.irr}, we can replace $D$ with some of its irreducible components
and assume that $D$ is a prime Cartier divisor. Since $X$ is hyperk\"ahler,
it is well-known that the numerical dimension of $D$ is 0,  $\frac{1}{2}\dim X$ or $\dim X$.
Therefore, $\nu(D)=2$ or 4 since  $D$ is nonzero.  
In the case $\nu(D)=4$, that is, $D$ is nef and big, we have that $D$ is semiample
by the basepoint-free theorem.
Since $D$ is not ample, the Iitaka fibration  $\pi\colon X\to Y$ induced by $|kD|$ for $k\gg 0$
is a birational morphism containing non-empty exceptional locus. By \cite{kawamata}*{Theorem 2},
the exceptional locus of $\pi$ is uniruled, hence $X$ contains rational curves.
Therefore, we always assume that $\nu(D)=2$ in the following.

Let us consider the following morphisms
as in the proof of  \cite{liu-matsumura}*{Theorem 3.3}:
\[
 \rho \circ \mu =f \colon E \xrightarrow{\quad \mu \quad}  D'  \xrightarrow{\quad \rho \quad}  D, 
\] 
where $f:= \rho \circ \mu$, $ \rho \colon D'\to D$ is the normalization, 
and $\mu\colon E\to D'$ is a relative minimal model 
(see  \cite{liu-matsumura}*{Subsection 2.3} for the definition).
By subadjunction (see \cite{jiang}*{Proposition 5.1}), 
there exists an effective $\mathbb Q$-divisor $\Delta_{D'}$ on $D'$ such that 
\[
\rho^*D|_{D'}=\rho^*(K_X+D)|_{D'}=K_{D'}+\Delta_{D'}. 
\]
Since $E$ is a relative minimal model,
there exists an effective $\mathbb Q$-divisor $G$ on $E$ such that
$K_E+G=\mu^*K_{D'}$. Set $B:=G+\mu^*\Delta_{D'}$. Then, we have that
\[
K_E+B=\mu^*(K_{D'}+\Delta_{D'})=f^*D|_E.
\]
In particular, $K_E+B$ is a nef Cartier divisor on $E$.
Since $\nu(D)=2$, we also have that $D^2|_D=D^3\equiv 0$. 
It follows that $(K_E+B)^2=(f^*D|_E)^2=D^2|_D=D^3\equiv 0$.
That is, $\nu(K_E+B)\leq 1$.
If $K_E$ is not nef, then choose a rational curve $R$ such that $K_E\cdot R<0$ by the cone theorem.
Since $K_E$ is $\mu$-nef by our construction, $R$ is not contracted by $\mu$.
Then, $f(R)=\rho\circ \mu (R)$ is a rational curve since $\rho$ is finite. 
Therefore, we further assume that $K_E$ is nef. In particular, $\kappa(E)\geq 0$
by the abundance theorem for terminal threefolds.
Since $B$ is effective, we have that 
\[
0\leq\kappa(E)\leq \kappa(K_E+B)\leq \nu(K_E+B)\leq 1.
\]
We discuss case by case according to $\kappa(K_E+B)$ and $\kappa(E)$.

In the case $\kappa(K_E+B)=0$, we have that $\kappa(K_E+B)=\kappa(E)=0$.
Since $K_E$ is nef, we have that $K_E\sim_{\mathbb Q}0$ by the abundance theorem
for terminal threefolds.
If $B\neq 0$, then choose a small rational number $\epsilon $ such that $(E, \epsilon B)$ is klt.
By the abundance theorem for klt pairs of dimension 3, we have that 
$\kappa(\epsilon B)=\kappa(K_E+\epsilon B)=\nu(K_E+\epsilon B)=\nu(\epsilon B)$.
Then, 
\[
\kappa(K_E+B)=\kappa(B)=\nu(B)\geq 1,
\] 
which is a contradiction. Therefore, we assume that $B=0$.
That is, $D$ has at most canonical singularities and $K_E=f^*K_D$. 
However, since $\nu(D)=2$, we have that $D^2\cdot H^2>0$
for some very ample divisor $H$ on $X$.
Then, 
\[
D|_D \cdot H|_D \cdot H|_D=D^2\cdot H^2>0,
\]
which implies that $\nu(K_D)=\nu(D|_D)\geq 1$. This contradicts to $f^*K_D=K_E\sim_{\mathbb Q}0$.
Therefore, this case can never happen.

In the case $\kappa(K_E+B)=1$, we have that $\kappa(K_E+B)= \nu(K_E+B)=1$ and $0\leq \kappa(E)\leq 1$.
If $\kappa(E)=0$, then $K_E\sim_{\mathbb Q}0$ by the abundance theorem as above. 
 It follows that $\kappa(B)= \nu(B)=1$.
By considering the klt pair $(E, \epsilon B)$ and the abundance theorem for klt pairs of dimension 3  as above,
we have that $B$ is semiample. 
Then, there exists a fibration $g\colon E\to C$ onto a smooth curve $C$ induced by $|kB|$ for some $k\gg 0$.
In particular,  $B$ is $g$-vertical. Therefore, $E$ and $D$ are isomorphic outside some fibers of $g$.
Let $F$ be a general fiber of $g$. We still denote $f(F)$ as $F$ if there is no risk of confusion.
Then, $K_F\sim_{\mathbb Q} 0$ by adjunction. In the case where $F$ is a K3 or Enriques surface, 
it is well-known that $F$ contains  rational curves, and so does $D$.
The case where $F$ is a bi-elliptic surface is excluded by our assumption. 
In the final case where $F$ is an abelian surface, 
$C$ is $\mathbb P^1$ or a smooth elliptic curve by the canonical bundle formula.
In the former case, $g$ is of type $\rm{\,I\,}_0$ in the sense of Oguiso \cite{oguiso}*{Main Theorem}
and there exists a rational curve $R$ on $E$ which is $g$-horizontal (see the proof 
of \cite{oguiso}*{Theorem 5.1} or \cite{dfm}*{Corollary 1.7}). In particular, 
$R$ is not contained in $B$ and thus $f(R)$ is a rational curve on $D$.
In the latter case, 
we have an \'etale cover $F\times C'\to E$,
where $C'$ is again a smooth elliptic curve (see \cite{oguiso}*{Appendix}).
Moreover, $f^*D|_E\sim_{\mathbb Q} B\sim_{\mathbb Q} aF$ for some positive rational number $a$. 
It follows that 
\begin{align*}
0&=\int_X D^2(\sigma\overline{\sigma})
=\int_D D|_D(\sigma\overline{\sigma})=\int_E f^*D|_E(f^*\sigma f^*\overline{\sigma}) \\
&=a\int_E F(f^*\sigma f^*\overline{\sigma})=a\int_F f^*\sigma f^*\overline{\sigma}
=a\int_F \sigma\overline{\sigma},
\end{align*}
where the first equation follows from $q_X(D, D)=0$ by Fujiki's lemma.
Therefore, $F$ is a holomorphic Lagrangian subsurface which is a complex torus.
Then, there exists an almost holomorphic Lagrangian fibration $h\colon X\dasharrow  S$ 
from a solution of Beauville's question (see \cites{cop, glr, hw}).
Hence, $X$ contains a rational curve  by the following Theorem \ref{thm.fib}.

Finally, if $\kappa(E)=1$, then $1=\kappa(E)\leq \kappa(K_E+\epsilon B)\leq \kappa(K_E+B)=1$ for any 
rational number $0\leq \epsilon \leq 1$. In particular, we can choose $\epsilon$ sufficiently small
such that $(E, \epsilon B)$ is klt and $ \kappa(K_E+\epsilon B)=1$. 
If $K_E+\epsilon B$ is not nef, then there exists 
a rational curve $R$ such that $(K_E+\epsilon B)\cdot R<0$ by the cone theorem.
Since $K_E$ is $\mu$-nef, $R$ is not contracted by $\mu$.
Then $f(R)$ is a rational curve on $D$ as above. Therefore, we assume that
 $K_E+\epsilon B$ is nef and
thus semiample by the abundance theorem. 
Let $g\colon E\to C$ be the Iitaka fibration onto a smooth curve $C$ 
induced by $|k(K_E+\epsilon B)|$ for some $k\gg 0$.
Let $B_F$ be an effective $\mathbb Q$-divisor on $F$
such that $K_F+B_F=(K_E+\epsilon B)|_F\sim_{\mathbb Q} 0$. 
If $B$ is  $g$-horizontal, then $B_F$ is  nonzero.
It follows that $F$ is a uniruled surface.
Therefore, $E$ and $D$ are  both covered by rational curves.
If $B$ is  $g$-vertical, then $B_F=0$ and $K_F\sim_{\mathbb Q} 0$.
Therefore, as above, we only need to consider the case where $F$ is an abelian surface.
In this case, the fibration $g$ induced by $|k(K_E+\epsilon B)|$ 
coincides with that induced by $|kK_E|$ for $k\gg 0$.
In particular, $K_E\equiv bF$ and 
$ B\sim_{\mathbb Q}(K_E+\epsilon B)-K_E\equiv c F$ for two positive rational numbers $b, c$.
It follows that 
\begin{align*}
0&=\int_X D^2(\sigma\overline{\sigma})
=\int_D D|_D(\sigma\overline{\sigma})=\int_E f^*D|_E(f^*\sigma f^*\overline{\sigma}) \\
&=\int_E (K_E+B)(f^*\sigma f^*\overline{\sigma})=(b+c)\int_E F(f^*\sigma f^*\overline{\sigma})
=(b+c)\int_F \sigma\overline{\sigma}.
\end{align*}
Therefore,  $F$ is a holomorphic Lagrangian subsurface which is a complex torus.
Then,  as above, there exists an almost holomorphic Lagrangian fibration $h\colon X\dasharrow  S$ 
from a solution of Beauville's question and
$X$ contains a rational curve by the following Theorem \ref{thm.fib}.
\end{proof}

\begin{rem}
Considering Oguiso's conjecture for Calabi--Yau fourfolds, the case
$\nu(D)=3$ is easy to deal with as in \cite{liu-matsumura}*{Remark 4.2}; the case  $\nu(D)=1$
can be reduced to the case $\nu(D)=3$ by Lemma \ref{lem.n=p} and taking a general line passing through
$D$ in $\mathbb P(H^2(X,\mathbb R))$ as in \cites{dfm, liu-svaldi,oguiso, wilson1} and the references therein.
In the remaining case  $\nu(D)=2$, parts of the proof of Theorem \ref{thm.main} also works except the case 
where a general fiber of (a relative minimal model of) $D$ is abelian or bi-elliptic.
\end{rem}

Part of the proof of Theorem \ref{thm.main} heavily depends on the prominent results of \cites{cop, glr, hw},
where an \emph{almost holomorphic Lagrangian fibration} is defined to be a dominant meromorphic map
$h\colon X\dasharrow  S$ such that there is a Zariski open
subset $U\subset S$ and the restriction $h|_{h^{-1}(U)}\colon h^{-1}(U)\to U$ is holomorphic and proper
with connected fibers, and the reduction of every irreducible component of a fiber of $h|_{h^{-1}(U)}$ is a Lagrangian subvariety of $X$. In the rest of this section, 
we prove the final puzzle of Theorem \ref{thm.main}.

\begin{thm}\label{thm.fib}
Let $h\colon X\dasharrow S$  be an almost holomorphic Lagrangian fibration 
from a  projective hyperk\"ahler fourfold $X$ onto a normal surface $S$.
Then, $X$ contains a rational curve.
\end{thm}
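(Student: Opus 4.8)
The plan is to analyze the geometry of the almost holomorphic Lagrangian fibration $h\colon X\dasharrow S$ and reduce to a situation where either a known uniruledness result applies, or a concrete rational curve can be exhibited on a fiber. First I would recall that by Matsushita's theorem the base $S$ of a Lagrangian fibration on a hyperk\"ahler fourfold, when $h$ is actually a morphism, is $\mathbb P^2$; in the almost holomorphic case one expects, after running a suitable birational modification (or by the results of \cites{cop, glr, hw} that produce such fibrations), to be reduced to a genuine morphism $h'\colon X'\to S'$ with $X'$ birational to $X$ and $X'$ still hyperk\"ahler, or at least to a smooth birational model on which $h'$ is a morphism. Since uniruledness is a birational invariant among smooth projective varieties and the existence of a rational curve on $X$ follows once we find one on any variety birational to $X$ (pushing it forward and using that $X$ has no non-trivial birational contractions to worry about, or simply that a resolution of $X$ dominates $X$), it suffices to work on such a model. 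The key point is that a Lagrangian fibration on a hyperk\"ahler fourfold has general fiber an abelian surface, so $h'$ is an abelian-surface fibration over a surface base.

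Next I would exploit the existence of singular fibers. Since $h'\colon X'\to S'$ is not isotrivial with abelian fibers everywhere — indeed if it were a smooth abelian-surface bundle then $X'$ would have an \'etale cover splitting off an abelian factor, contradicting that $X'$ is (birational to) a hyperk\"ahler fourfold, which is simply connected with $h^{1,0}=0$ — there must be degenerate fibers. The strategy is then to restrict the fibration over a general curve: cut $S'$ by a general very ample curve $B\subset S'$ to obtain a threefold $X_B:=h'^{-1}(B)\to B$, an elliptic-or-abelian-surface fibration over a curve with some singular fibers. By the classification of such fibrations (this is precisely where one invokes Oguiso's work \cite{oguiso}*{Main Theorem} on fibrations of complex tori over curves, and the analysis of the singular fibers), a singular fiber of $X_B\to B$ contains a rational curve, or $X_B$ is itself uniruled in the relevant range, giving a rational curve on $X_B\subset X'$ and hence on $X$. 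Alternatively, one directly applies Kawamata's theorem \cite{kawamata}*{Theorem 2} or a result on the uniruledness of the exceptional/degenerate locus: the locus over which $h'$ is not smooth is a divisor whose fiberwise behavior forces rational curves, since a degeneration of abelian surfaces within a family with trivial relative canonical class produces rational curves in the special fiber by the semistable reduction picture described in Subsection \ref{sub2.3}.

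The hard part, and the main obstacle, is controlling the behavior of $h$ along its indeterminacy locus and ensuring that the rational curves produced on a model $X'$ actually descend to (or were already present on) $X$ — a priori the birational map $X\dasharrow X'$ could contract the rational curves we find, so I would need to argue that the relevant rational curves can be chosen in general fibers or in the generic part of the degenerate locus where the birational map is an isomorphism. A second delicate point is that $S$ is only assumed to be a normal surface, not a priori $\mathbb P^2$, so I cannot directly quote the sharpest forms of Matsushita's and Hwang's base-space theorems; I would either establish $S$ is $\mathbb P^2$ (up to the birational modification above) or else carry out the curve-slicing argument over a general complete intersection curve in $S$ without needing to know $S$ precisely. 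Granting these two points, the existence of a rational curve follows by combining the abelian-fibration structure with the classification of its degenerations, exactly as in the lower-dimensional cases treated by Oguiso and Peternell and as used already in the proof of Theorem \ref{thm.main} above.
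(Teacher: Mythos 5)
Your opening reduction to a genuine holomorphic Lagrangian fibration is essentially the paper's first step, and the descent issue you flag as ``the hard part'' is resolved there cleanly: by \cite{glr}*{Theorem 6.2} one passes to a holomorphic model $\widetilde h\colon \widetilde X\to \widetilde S$, and either the birational map $\widetilde X\dasharrow X$ fails to be a morphism, in which case $X$ contains a rational curve directly by \cite{kollar-mori}*{Corollary 1.5}, or it is a morphism between hyperk\"ahler manifolds with identical Hodge diamonds and hence an isomorphism. (Your appeal to ``uniruledness is a birational invariant'' is not the right tool: existence of a single rational curve is not a birational invariant, as blowing up an abelian surface shows.)

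The genuine gap is in the second half. Once you assume $X$ has no rational curve, \cite{oguiso}*{Appendix, Theorem B.1} forces every fiber of $h$ to be a smooth abelian \emph{or bi-elliptic} surface; in particular there are no ``degenerate fibers containing rational curves'' left to find, and slicing by a general curve $B\subset S$ yields a family with no singular fibers at all (any ruled fibers would already give rational curves, so they may be assumed to lie over finitely many points, which a general $B$ misses). Thus the classification of degenerations you invoke has nothing to bite on. Your intended contradiction (``a smooth abelian-surface bundle would split after an \'etale cover'') is the right endgame, but it does not apply as stated: the fibration can have multiple fibers whose reductions are bi-elliptic surfaces, which contain no rational curves and obstruct both the splitting argument and the constancy of the period map. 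Eliminating these is the actual content of the paper's proof: one performs a codimension-one reduction (Subsection \ref{sub2.3}) along the locus of bi-elliptic fibers, observes that the resulting cover $\varphi\colon X'\to X$ is quasi-\'etale, and uses simple connectivity of $X$ minus a codimension-two set to conclude the cover is trivial, so no bi-elliptic fibers occur; only then does one obtain isotriviality (the period map from the compact base of the now-smooth abelian family to a bounded domain is constant), combine it with $S\cong\mathbb P^2$ from \cites{hx,ou} to trivialize the family, and contradict $q(X)=0$. Without the bi-elliptic/multiple-fiber analysis your argument does not close.
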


\begin{proof}
By \cite{glr}*{Theorem 6.2}, there exists a commutative diagram:
\[
\xymatrix{
X \ar@{-->}[d]_-{h}\ar@{-->}[r]& \widetilde X\ar[d]^-{\widetilde h}\\
S \ar@{-->}[r] & \widetilde S
}
\]
where $\widetilde h$ is a holomorphic Lagrangian fibration 
on a projective hyperk\"ahler manifold $\widetilde X$ (a birational model of $X$)
and the horizontal maps are birational. If the rational map $ \rho\colon \widetilde X\dasharrow X$ is not 
everywhere defined, then $X$ contains rational curves by \cite{kollar-mori}*{Corollary 1.5}.
Therefore, we assume that $\rho\colon \widetilde X\to X$ is a morphism.
Since $\widetilde X$ and $X$ have the same Hodge diamonds (see \cite{huy}*{Corollary 4.7}),
$\rho$ has to be an identity (otherwise, the non-trivial fiber of $\rho$ will contribute
difference to the Hodge structures of $\widetilde X$ and $X$ in some weight corresponding to
the dimension of the non-trivial fiber). Therefore, we can always assume that 
$h\colon X\to S$  is a holomorphic Lagrangian fibration after replacing $h$ by $\widetilde h$
in the sequel.

For a contradiction, we further assume that $X$ contains no rational curve.
In particular, $X$ contains no ruled surface.
Then, by \cite{oguiso}*{Appendix, Theorem B.1}, every fiber of $h$ is a smooth abelian 
or bi-elliptic surface. Let $C$ be the union of all points on $S$ over which
the fibers are bi-elliptic surfaces.
Let 
\[
\xymatrix{
X' \ar[d]_-{h'}\ar[r]^-{\varphi}& X\ar[d]^-{h}\\
S'\ar[r]_-\pi & S.
}
\]
be a codimension one reduction with respect to $C$ as in Subsection \ref{sub2.3}.
Note that $\pi\colon S' \to S$ is not only ramified over $C$.
Let $C'=\pi^{-1}(C)$. Since $\pi$ is a codimension one reduction,
the multiplicity of the fiber over the generic point of every component of $C'$ is $1$. 
Then, any fiber of $h'$ is a smooth abelian surface 
outside a  subset $P'$ of codimension 2 (see the proof of \cite{oguiso}*{Appendix, Theorem B.1}).
Set $P:=\pi(P')$.
Let $H$ be a general smooth curve on $S$ avoiding the set $P$.
Set $Y_H=h^{-1}(H)$, $H'=\pi^{-1}(H)$, and $Y'_{H'}=h'^{-1}(H')$.
Since $H$ is general, we can assume that $H'$ and $Y'_{H'}$ are smooth.
 Then, there exists an induced commutative diagram
as follows:
\[
\xymatrix{
Y'_{H'} \ar[d]_-{h'}\ar[r]^-{\varphi}& Y_H\ar[d]^-{h}\\
H'\ar[r]_-\pi & H,
}
\]
where $h'$ is smooth and $\varphi$ is an \'etale cover  by \cite{oguiso}*{Appendix, Theorem B.1}.
By moving $H$ (thus $H'$), we can see that 
 $\varphi\colon X'\to X$ is a quasi-\'etale cover (finite cover \'etale in codimension one),
possibly ramified over the singular fibers $h^{-1}(P)$.
Since the fundamental group of  $X\backslash h^{-1}(P)$ is trivial, 
$\varphi$ has to be an identity. It follows that $\pi$ has to be an identity.
Since $H$ is closed and the classifying space $\mathcal D$ of abelian surfaces is open,
the period map $J\colon H\to \mathcal D$ is a constant.
By moving $H$, we can see that
the period map $J\colon S_0\to \mathcal D$ 
is a constant, where $S_0:=S\backslash P$.
Then, $J$ extends to a constant morphism $J\colon S\to \mathcal D$. 
That is, $h$ is an isotrivial morphism.
However, since $S=\mathbb P^2$ by \cites{hx, ou}, $S$ is simply connected;
it follows that the period map $J\colon S\to \mathcal D$ factors through
the period domain, which is the Siegel upper half-plane $\mathbb H_2$.
In particular, $X=F\times S$, where $F$ is the general fiber of $h$.
It follows that $q(X)=h^1(F\times S, \mathcal O_{F\times S})\geq 1$, which is a contradiction.
\end{proof}

\begin{rem}
Let $h\colon X\to S$  be a holomorphic Lagrangian fibration 
from a  projective hyperk\"ahler fourfold $X$ onto a normal surface $S$.
We actually showed  in Theorem \ref{thm.fib}
that the subset of $S$ over which the fibers are ruled
surfaces can not be of codimension 2.
Indeed, if this is not the case, then with the same notation and arguments, 
we have that $\varphi\colon X'=F\times S'\to X$ is a 
quasi-\'etale cover, which is impossible as above.

Therefore, there must exists a curve $C$ on $S$
such that $h^{-1}(C)=\sum Z_i$, where $Z_i$ is a fibration of ruled surfaces with $\dim Z_i=3$.
Note that a rational curve deforms in a family of dimension at least $2$
by \cite{ran} (see also \cite{av1}*{Theorem 4.1}).
Then, every $Z_i$ is just an
irreducible component of the locus covered by the deformation of 
some rational curve contained in some ruled surface.
\end{rem}

\begin{rem}
If $X$ is a  hyperk\"ahler  fourfold with algebraic dimension $0<a(X)<4$,
then the algebraic reduction map
induces a holomorphic Lagrangian fibration (see \cite{cop}*{Theorem 1.2} for example).
In this case, very similar arguments as in Theorem \ref{thm.fib} show that $X$ contains rational curves.
On the other hand, a general  hyperk\"ahler  fourfold contains neither effective divisors nor curves
(see \cite{huy}*{(1.17)}), thus contains no rational curves.
\end{rem}


The semiampleness (SYZ) conjecture predicts that a projective hyperk\"ahler manifold $X$ 
admits a birational contraction or a holomorphic Lagrangian fibration 
induced by the nonzero nef divisor $D$ contained in the boundary of $\nef(X)$.
In this case, as showed in Theorems \ref{thm.main} and \ref{thm.fib}, it is reasonable to predict that 
there exists a rational curve $C$ contained in some singular fiber
such that $D\cdot C=0$. By these observations, we propose the following conjecture,
which is a parallel version of \cite{av1}*{Theorem 1.9} (and the references therein)
and a generation of \cite{liu-matsumura}*{Conjecture 1.1} for hyperk\"ahler manifolds.

\begin{conj}
Let X be a projective hyperk\"ahler manifold. 
Let $D$ be a nef Cartier divisor on $X$ such that $D\cdot C>0$ for any rational curve $C$ on $X$.
Then, $D$ is ample.
\end{conj}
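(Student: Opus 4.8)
The plan is to argue by contradiction: assuming $D$ is nef but not ample, I would try to produce a rational curve $C$ on $X$ with $D\cdot C=0$, which is absurd by hypothesis. Since $D$ is nef, $D\cdot C\geq 0$ for every curve, so it is enough to find one rational curve on which $D$ is numerically trivial. If $\nu(D)=0$ then $D\equiv 0$, which is never ample; hence the conjecture should be read with the (surely intended) hypothesis $D\not\equiv 0$, and then $\rho(X)\geq 2$. If $D$ is big, then $D$ is nef and big, so by the basepoint-free theorem (using $K_X\sim_{\mathbb Q}0$) $D$ is semiample; the induced morphism $\varphi\colon X\to Y$ is birational with $\Exc(\varphi)\neq\emptyset$ because $D$ is not ample, and $mD=\varphi^{*}H$ for some very ample $H$ on $Y$ and $m\gg0$. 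By \cite{kawamata}*{Theorem 2} the locus $\Exc(\varphi)$ is uniruled, and any rational curve $C$ contracted by $\varphi$ --- such curves exist inside the positive-dimensional fibers of $\varphi$ --- satisfies $D\cdot C=\tfrac1m\varphi^{*}H\cdot C=0$, a contradiction. Hence $D$ is not big, and by the well-known trichotomy $\nu(D)\in\{0,\tfrac12\dim X,\dim X\}$ for nef divisors on hyperk\"ahler manifolds (already used in the proof of Theorem \ref{thm.main}) we are reduced to $\nu(D)=\tfrac12\dim X$, equivalently $q_X(D,D)=0$ and $D\not\equiv 0$.

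The remaining, essential case is that of an isotropic nef Cartier divisor, and the strategy would be to run the argument of the proof of Theorem \ref{thm.main} (for $\dim X=4$; in higher dimensions one would need analogous inputs), while checking that every rational curve it produces lies in the null locus of $D$. Assuming $\kappa(D)\geq 0$, one replaces $D$ by an effective representative, passes to a relative minimal model $f\colon E\to D$ of the normalization, and writes $K_E+B=f^{*}D|_E$ with $\nu(K_E+B)\leq 1$; the case analysis then gives either a rational curve on $D$ coming from $K_E$ (or $K_E+\epsilon B$) failing to be nef, or a fibration $g\colon E\to C$ onto a curve whose fibers contain rational curves, or an almost holomorphic Lagrangian fibration $h\colon X\dashrightarrow S$, which is handled by Theorem \ref{thm.fib}. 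In the fibration cases the curves one uses are $g$-vertical, and there $f^{*}D|_E\sim_{\mathbb Q}B$ is numerically trivial; in the Lagrangian case $D$ is a multiple of a pullback from $S$, so the rational curves produced in the singular fibers by Theorem \ref{thm.fib} satisfy $D\cdot C=0$, as observed in the remarks after that theorem. The bi-elliptic case is the only exception, exactly as in Theorem \ref{thm.main}.

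I expect the main obstacle to be the case $\kappa(D)=-\infty$ with $\nu(D)=\tfrac12\dim X$: here the present techniques yield no rational curve whatsoever, and this is precisely the open part of the semiampleness (SYZ) circle of conjectures --- if one grants that such a $D$ is semiample, it induces a Lagrangian fibration and Theorem \ref{thm.fib} finishes the job. Two further difficulties deserve mention. First, the reduction to a prime component of $D$ via Lemma \ref{lem.irr} is not directly available, since that lemma assumes $X$ carries \emph{no} rational curve, whereas the conjecture only forbids rational curves $C$ with $D\cdot C\leq 0$; one would have to work with a possibly reducible effective representative of $D$, or single out a component $D_i$ with $\nu(D_i)=\tfrac12\dim X$ whose null locus still controls the problem. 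Second, one must verify that the rational curves coming from the $K_E$-negative (or $(K_E+\epsilon B)$-negative) part of the argument are numerically trivial against $D$; this is plausible since $\nu(K_E+B)\leq 1$ confines them to a one-dimensional subvariety of $E$, but it needs care. Finally, the bi-elliptic case would remain open by this route; as in \cite{av1}, removing these last exceptions would presumably still require the monodromy and MBM-class machinery.
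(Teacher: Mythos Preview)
The statement you are addressing is a \emph{conjecture} in the paper, not a theorem; the paper offers no proof and explicitly presents it as an open problem motivated by the results of Theorems \ref{thm.main} and \ref{thm.fib}. There is therefore no ``paper's own proof'' to compare your proposal against.

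Your sketch is not a proof and you acknowledge this yourself: it is an outline of how one might attack the conjecture by recycling the machinery of Theorem \ref{thm.main}, together with an honest inventory of the obstructions. The gaps you identify are real and serious. In particular: (i) the case $\kappa(D)<0$ with $\nu(D)=\tfrac12\dim X$ is precisely the open SYZ/semiampleness problem, and nothing in the paper circumvents it; (ii) Lemma \ref{lem.irr} is unavailable under your weaker hypothesis, so the reduction to a prime component is not justified; (iii) the bi-elliptic case remains out of reach by this method, exactly as in Theorem \ref{thm.main}; and (iv) verifying $D\cdot C=0$ for the rational curves produced via the cone theorem on $E$ (rather than on $X$) needs an argument you do not supply. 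Any one of these is enough to block a proof, and the paper poses the statement as a conjecture for the same reasons.
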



\end{document}